 \newtheorem{thm}{Theorem}[section]
 \theoremstyle{definition}
 \theoremstyle{remark}
 \numberwithin{equation}{section}
\begin{document}

%-------------------------------------------------------------------------
% editorial commands: to be inserted by the editorial office
%
%\firstpage{1} \volume{228} \Copyrightyear{2004} \DOI{003-0001}
%
%
%\seriesextra{Just an add-on}
%\seriesextraline{This is the Concrete Title of this Book\br H.E. R and S.T.C. W, Eds.}
%
% for journals:
%
%\firstpage{1}
%\issuenumber{1}
%\Volumeandyear{1 (2004)}
%\Copyrightyear{2004}
%\DOI{003-xxxx-y}
%\Signet
%\commby{inhouse}
%\submitted{March 14, 2003}
%\received{March 16, 2000}
%\revised{June 1, 2000}
%\accepted{July 22, 2000}
%
%
%
%---------------------------------------------------------------------------
%Insert here the title, affiliations and abstract:
%

\title[Development of a Surface having Regular Polygonal Base and Elliptic Arcs]
 {\begin{center} Development of a Surface having Regular Polygonal Base and Elliptic Arcs\end{center}}

%----------Author 1
\author{Shahid Saeed Siddiqi}
\address{Faculty of Information Technology, University of Central Punjab, Lahore-Pakistan}
\email{shahid.saeed@ucp.edu.pk}
%----------Author 2
\author{Abdul Rauf Nizami}
\address{Faculty of Information Technology, University of Central Punjab, Lahore-Pakistan}
\email{arnizami@ucp.edu.pk}
%----------Author 3
%----------classification, keywords, date

\maketitle
%%% ----------------------------------------------------------------------
%\vspace{20pt}
%\date{today}%February 08, 2013
%----------additions
%\dedicatory{To my boss}
%%-------------------------------------------------------------------------------------------------
\begin{abstract}
This paper aims to develop the mathematical representation of a surface generated by elliptical arcs joining the sides of a regular polygon to a point lying vertically upward on the central axis of the polygon. The volume of the corresponding solid has also been determined.
\end{abstract}
%%%---------------------------------------------------------------------
\subjclass{\textbf{Subject Classification (2010)} 00A05.  }%92E10

%%%------------------------------------------------------------------------
\keywords{\textbf{Keywords} Regular polygon; Elliptic arcs; Parametric representation. }
%%% ----------------------------------------------------------------------
%\maketitle

\pagestyle{myheadings}
\markboth{\centerline {\scriptsize
 Shahid S. Siddiqi and A. R. Nizami}} {\centerline {\scriptsize
 Development of a Surface having Regular Polygonal Base and Elliptic Arcs}}
%%% ----------------------------------------------------------------------
%\tableofcontents
%%----------------------------------------------- section --------------------------------------------------------
%%----------------------------------------------- section --------------------------------------------------------
%\tableofcontents
\section{Introduction}To find the volume of hemisphere, two different approaches are available, integral  and geometric. In integral approach, the volume of hemisphere is calculated using single and double integrals \cite{Anton 10, Thomas 13}. Geometrically, the volume of hemisphere is determined by showing the volume of hemisphere equal to the volume of the cylinder outside the cone (as shown in Figure~\ref{fig0}) \cite{Roberts}. The idea was actually the cross-sections have the same area.
\begin{figure}[h]
  \centering
     \includegraphics[width=8cm]{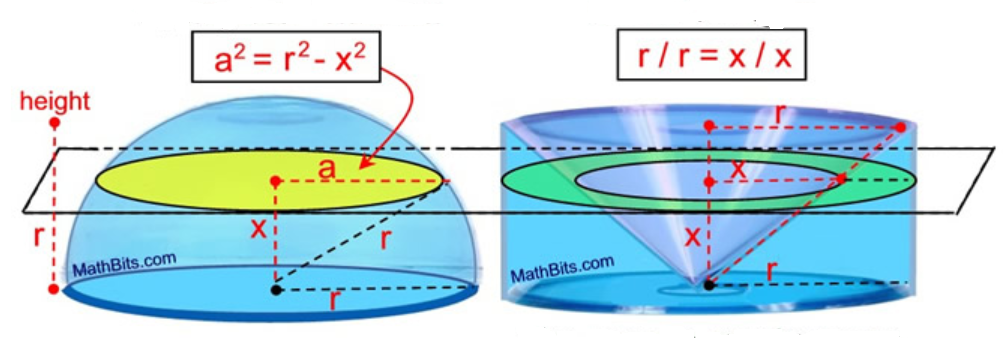}
     \caption{}\label{fig0}
\end{figure}

\noindent In this paper, the hemisphere has been developed using a different approach. The cylinder without the  cone inside it has been sliced into $n$ washers. These washers, after converting to discs of volumes of corresponding washers, have been stacked  as shown in   Figure~\ref{fig1}.
\pagebreak
\goodbreak
\begin{figure}[h]
  %\centering
  \begin{minipage}{3.5cm}
   \includegraphics[width=3.5cm]{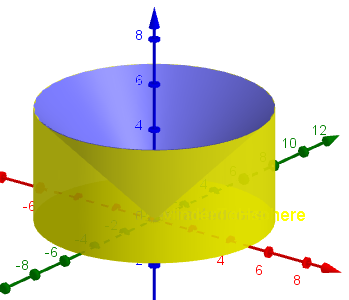}
  \end{minipage}
  \begin{minipage}{3.5cm}
   \includegraphics[width=3.5cm]{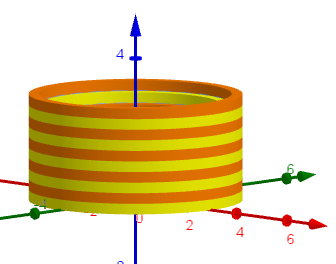}
  \end{minipage}
 \begin{minipage}{3.5cm}
   \includegraphics[width=3.5cm]{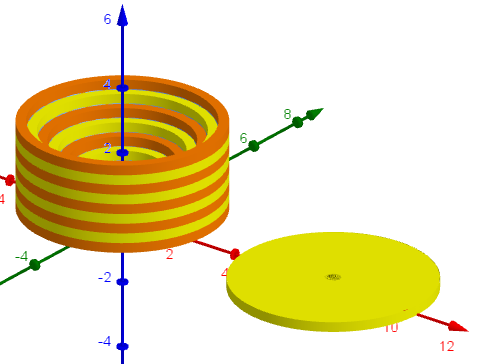}
  \end{minipage}
  \end{figure}

\begin{figure}[h]
%  \centering
  % Requires \usepackage{graphicx}
  \begin{minipage}{3.5cm}
   \includegraphics[width=3.5cm]{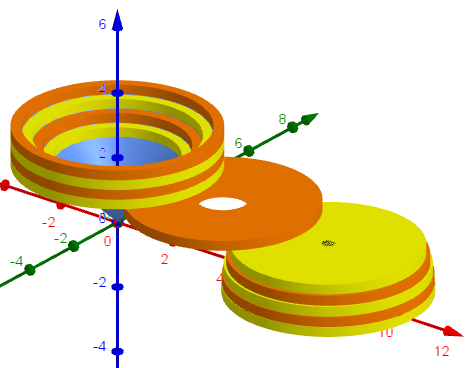}
  \end{minipage}
  \begin{minipage}{3.5cm}
   \includegraphics[width=3.5cm]{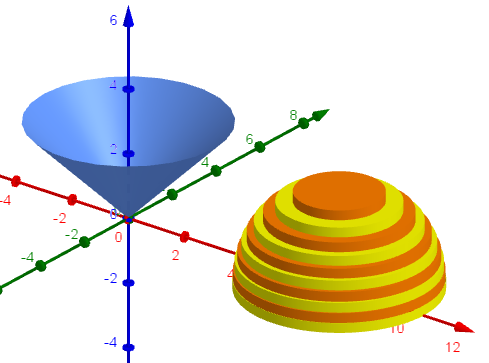}
  \end{minipage}
 \begin{minipage}{3.5cm}
   \includegraphics[width=3.5cm]{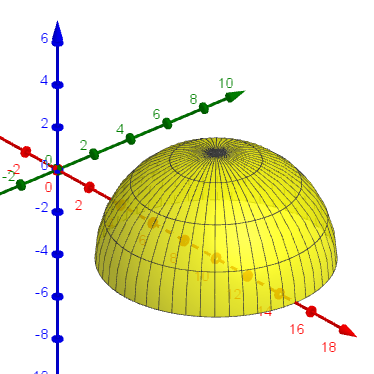}
  \end{minipage}
    \caption{}\label{fig1}
\end{figure}
\noindent The hemisphere is thus obtained as a limiting surface as $n\rightarrow \infty$. The idea of obtaining hemisphere has been extended to generate a limiting surface obtained by stacking the slices of cuboid, excluding the pyramid inside it, after converting the slices into the slabs of volumes of corresponding slices. Lastly, the idea has been extended to obtain the surface with any regular polygonal base.

\noindent To develop the surface having a regular polygonal base along with the surface generated by the elliptic arcs joining the sides of the polygon to the point lying vertically upward on the central axis of the polygon, consider the solid corresponding to square base, as shown in Figure~\ref{fig2}.
 \begin{figure}[h]
  \centering
  % Requires \usepackage{graphicx}
  \includegraphics[width=6.5cm,height=3.3cm]{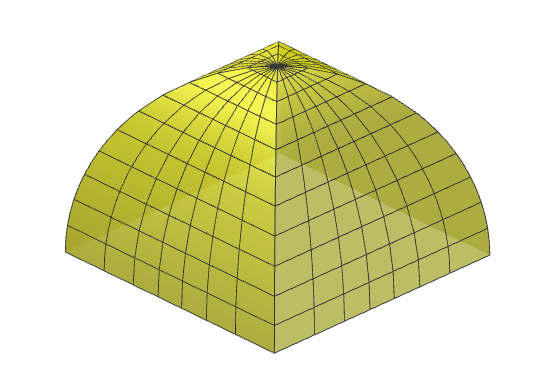}\\
  \caption{Main Solid}\label{fig2}
\end{figure}\\
Initially, the geometrical development and mathematical representation of the case with square base has been discussed in Sections ~\ref{sec2} and 3, respectively. The case, with the regular polygonal base, has been developed in Section 4 .
\section{Geometrical Development of the Surface}\label{sec2}
 To develop the structure shown in Figure~\ref{fig2}, consider a pyramid with square base lying in the cuboid with the same square base and height equal to half of the side of the square, as shown in Figure~\ref{fig3}.
\begin{figure}[h]
  \begin{center}
      \begin{minipage}{5.0cm}
   \includegraphics[width=4.0cm]{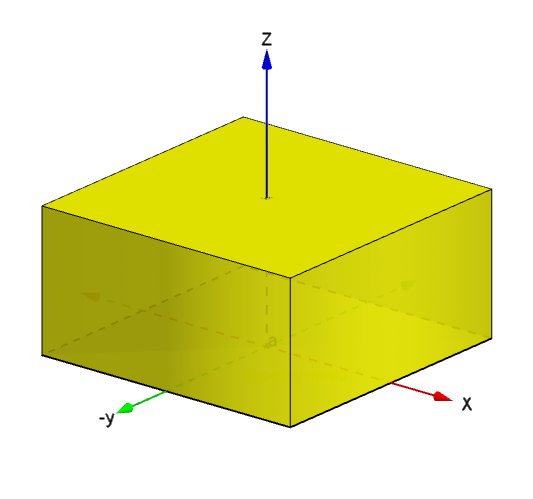}
     \end{minipage}
  \begin{minipage}{5.0cm}
   \includegraphics[width=4.0cm]{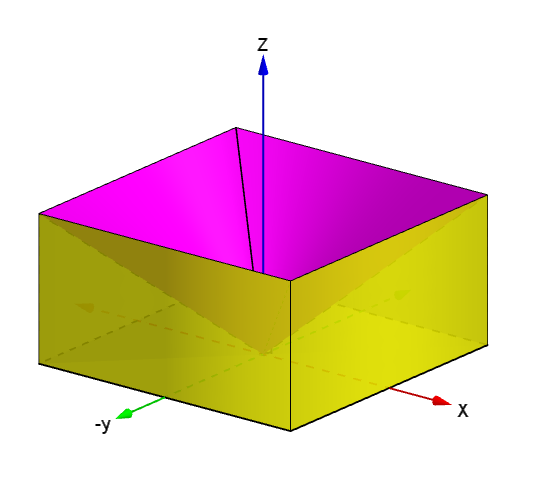}
  \end{minipage}
  \end{center}
       \caption{Pyramid in Cuboid}\label{fig3}
\end{figure}

\noindent The cuboid with inserted pyramid has been sliced  into $n$ equal parts parallel to the $xy$-plane, as shown in Figure~\ref{fig4}.
%\pagebreak
\begin{figure}[h]
  \centering
  % Requires \usepackage{graphicx}
      \includegraphics[width=8.0cm]{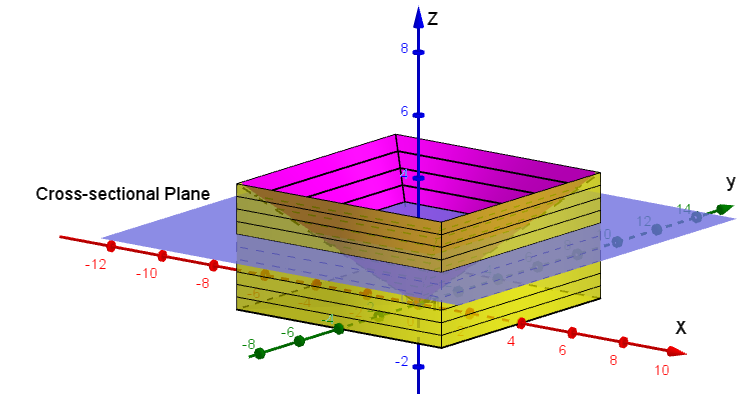}
  \caption{Sliced Cuboid}\label{fig4}
\end{figure}

\noindent Moreover, each slice has been transformed to slab having the volume equal to the volume of the original slice. One of the slices has been shown in Figure~\ref{fig5}.
\begin{figure}[h]
  \centering
%  % Requires \usepackage{graphicx}
  \includegraphics[width=5cm]{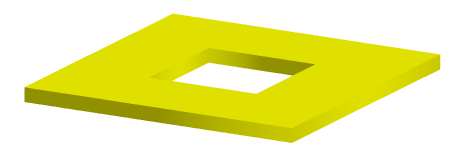}
  \includegraphics[width=4cm]{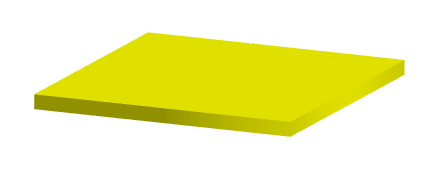}\\
  \caption{Slice with corresponding slab}\label{fig5}
\end{figure}

\noindent Also, all the new slabs have been concentrically stacked with the same order, as shown in Figure~\ref{fig6}.
  \begin{figure}[h]
  \centering
%  % Requires \usepackage{graphicx}
  \includegraphics[width=6.5cm]{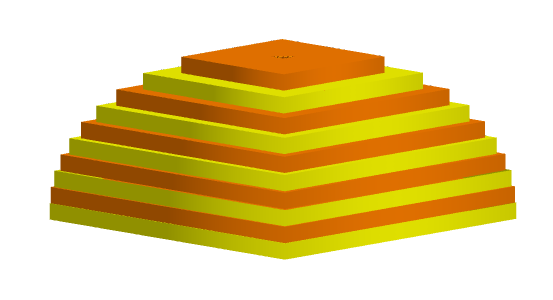}\\
  \caption{Stacked Slabs}\label{fig6}
\end{figure}

\noindent  It may be noted that increasing the number of slices will smooth the surface generated by these slabs. In the limiting case, as $n \rightarrow \infty$, the  generated surface will be perfectly smooth as shown in  Figure~ \ref{fig7}
  \begin{figure}[h]
  \centering
%  % Requires \usepackage{graphicx}
  \includegraphics[width=7.0cm]{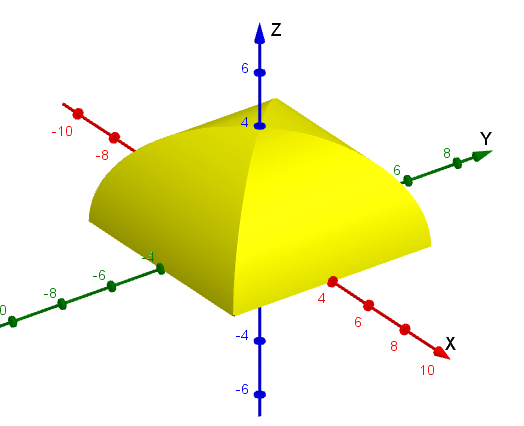}\\
  \caption{Smooth Surface}\label{fig7}
\end{figure}
\section{Mathematical Development of the Surface}
\begin{thm}
The curve of intersection of the surface and a vertical plane through the origin is elliptic.
\end{thm}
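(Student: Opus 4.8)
The plan is to reduce the assertion to an equation for the generated surface and then intersect that surface with an arbitrary vertical plane through the central axis. Fix coordinates so that the centre of the square base is the origin and the central axis is the $z$-axis; let $2a$ be the side of the square, so the apex sits at $(0,0,a)$ (the bounding cuboid has height equal to half the side). The construction of Section~\ref{sec2} pins down the limiting solid exactly: at height $z$ its horizontal cross-section is a square, concentric with the base and with sides parallel to it, whose \emph{area} equals the area of the washer obtained at height $z$ by deleting the inscribed pyramid from the cuboid. Writing that area as (area of the cuboid's square section) $-$ (area of the pyramid's square section) and solving for the half-side $w(z)$ of the replacement square yields
\[
 w(z)^{2}=a^{2}-z^{2},\qquad 0\le z\le a ,
\]
so the lateral surface is the locus $\max\!\big(x^{2},y^{2}\big)=a^{2}-z^{2}$.

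Next, write a vertical plane through the origin as $\{(t\cos\phi,\,t\sin\phi,\,z):t,z\in\mathbb{R}\}$ for a fixed angle $\phi$; by the eightfold symmetry of the square it is enough to take $0\le\phi\le\pi/4$, and on that range $|t\cos\phi|\ge|t\sin\phi|$, so the surface equation reduces to $t^{2}\cos^{2}\phi=a^{2}-z^{2}$. Rearranged, this reads
\[
 \frac{t^{2}}{(a\sec\phi)^{2}}+\frac{z^{2}}{a^{2}}=1 ,
\]
the equation of an ellipse — a circle when $\phi=0$ — with semi-axes $a\sec\phi$ and $a$; since $0\le z\le a$, the intersection is the upper half of this ellipse, i.e.\ an elliptic arc, which is exactly what the theorem claims. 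The case $\pi/4\le\phi\le\pi/2$ is identical with $\csc\phi$ in place of $\sec\phi$.

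The only steps needing genuine care are the derivation of the profile $w(z)$ — one must fix the orientation of the inscribed pyramid and the linear rate at which its cross-section grows with $z$, since a slip there changes the quadratic under the square root — and the bookkeeping for a cutting plane in a general direction, where a vertical plane meets a square cross-section in a chord of half-length $w(z)\sec\phi$ or $w(z)\csc\phi$ depending on the octant, so one should check that both cases land on a genuine ellipse. (Anticipating Section~4, for a base polygon without central symmetry the two half-chords cut by the plane must be treated separately, each being an elliptic arc.) Everything else is routine algebra, and the limit $n\to\infty$ enters only through the already-established fact that the cross-sectional area at height $z$ equals the washer area.
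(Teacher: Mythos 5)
Your proof is correct and rests on the same geometric fact as the paper's --- that the level set of the limiting solid at height $z$ is a concentric square of half-side $\sqrt{R^{2}-z^{2}}$, obtained by matching the slab's area to the washer's area $4R^{2}-4z^{2}$ --- but you carry it further in two useful ways. First, you actually derive the profile $w(z)^{2}=a^{2}-z^{2}$ from the area-matching condition, whereas the paper writes down the half-diagonal $\sqrt{2}R\sqrt{1-(i/n)^{2}}$ of the $(i+1)$st slab without justification; your version makes explicit where the quadratic under the square root comes from (and, as you note, that it depends on the pyramid having its apex at the centre of the base and its own base at the top of the cuboid). Second, and more importantly, the paper only computes the intersection with the single plane through the diagonal, declaring this ``without loss of generality''; since planes at different angles $\phi$ cut out ellipses of different eccentricities (semi-axes $a\sec\phi$ and $a$, reducing to the paper's $\sqrt{2}R$ and $R$ at $\phi=\pi/4$ and to a circle at $\phi=0$), that reduction is not literally WLOG, and your treatment of an arbitrary plane $\{(t\cos\phi,t\sin\phi,z)\}$ via $\max(x^{2},y^{2})=a^{2}-z^{2}$ is what actually proves the theorem as stated. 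Your closing observations --- that the two half-chords must be handled separately for base polygons without central symmetry, and that the limit $n\to\infty$ enters only through the identification of the cross-sectional area --- are correct and would strengthen the paper's argument if incorporated.
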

\begin{proof}
Without loss of generality, consider the curve obtained by intersection of the surface and the vertical plane (passing through the diagonal of the square), as shown in  Figure~ \ref{fig8}.
\pagebreak
\goodbreak
\begin{figure}[h]
  \centering
%  % Requires \usepackage{graphicx}
  \includegraphics[width=5.0cm]{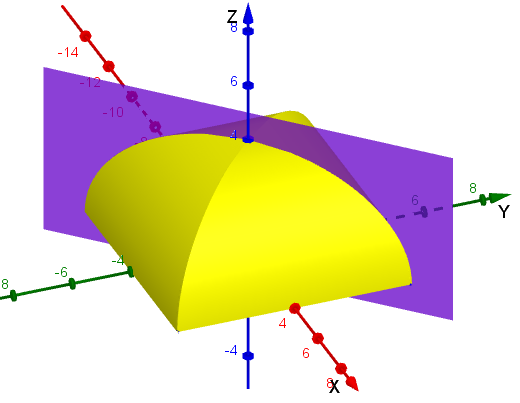}\\
  \caption{Intersection of the surface with the vertical plane }\label{fig8}
\end{figure}

\noindent Since the surface has been obtained as $n \rightarrow \infty$ of the $n$ stacked slabs, consider the intersection of vertical plane passing through the diagonal of the square with the stacked slabs, along with  the base of the $(i+1)$st slab and a point $Q(x,y,z)$,as shown in Figure~\ref{fig9}.
 \begin{figure}[h]
  \centering
%  % Requires \usepackage{graphicx}
  %\includegraphics[width=3.5cm]{figure12point1}\\
 % \includegraphics[width=3.5cm]{figure12point2}\\
 \includegraphics[width=6.5cm]{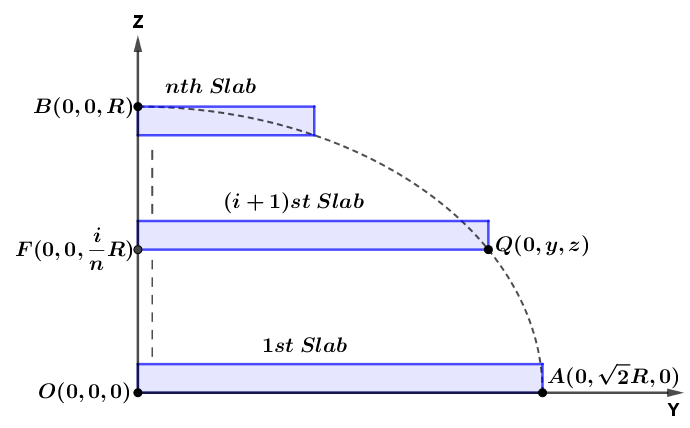}\\
  \caption{ }\label{fig9}
\end{figure}

\noindent To establish the relation among the coordinates of the point Q, the resulting intersection has been rotated about origin in $xy$-plane through the angle $\frac{\pi}{4}$, resulting the point Q(0,y,z), as shown in Figure~\ref {fig9}. It can, thus, be written as
\begin{eqnarray*}
% \nonumber to remove numbering (before each equation)
  z &=& \frac{i}{n}R \\
  y &=& \sqrt{(\sqrt{2}R)^{2}-(\frac{i}{n}\times\sqrt{2}R)^{2}}=\sqrt{2}R\sqrt{1-(\frac{i}{n})^{2}}\\
 y^{2} &=& 2R^{2}(1-(\frac{i}{n})^{2})=2R^{2}(1-(\frac{z}{R})^{2})=2R^{2}-2z^{2}\\
 y^{2}+z^{2} &=& 2R^{2}\\
 \frac{y^{2}}{(\sqrt{2}R)^{2}}+\frac{z^{2}}{(R)^{2}} &=& 1
\end{eqnarray*}
Since the last equation represents the ellipse in the $yz-$ plane, the arcs are elliptic.
\end{proof}
\section{Development of the Surface}A square $A_{1}A_{2}A_{3}A_{4}$ has been considered in the $xy$-plane with side $2R$. The center of the square has been taken at the origin and  its side $A_{1}A_{2}$ perpendicular to the $x$-axis, as shown in Figure~\ref{fig10}.
 \begin{figure}[h]
  \centering
  % Requires \usepackage{graphicx}
  \includegraphics[width=6.5cm]{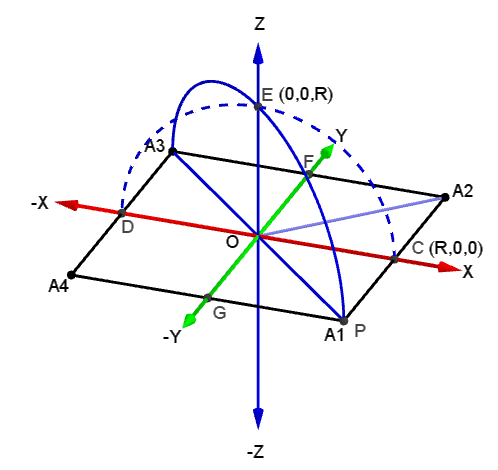}
    \caption{}\label{fig10}
\end{figure}\\
 If $(x,y,z)$ is any point on the quarter-circular arc $CE$, then the parametric equations of this arc are
\[
\left.
\begin{array}{lll}
% \nonumber to remove numbering (before each equation)
  x &=& R\cos (t) \\
  y &=& 0\\
  z &=& R\sin (t)
  \end{array}
  \right\},
  t\in \left[ 0,\frac{\pi}{2} \right]
   \] \\
Although the surface can be obtained by rotating the arc $CED$ through an angle $\pi$, the suqare is divided into four triangles $OA_1A_2$, $OA_2A_3$,  $OA_3A_4$  and $OA_4A_1$. The purpose is to parametrize the surface. To generate the surface over the triangle $OA_1A_2$ (shown in Figure~\ref{fig11}), a radial segment $OP$ is swept counterclockwise from $OA_1$ to $OA_2$ such that the point $P$ moves on the segment $A_1A_2$. It follows that the length of $OP$ depends on the angle $r$ the $OP$ makes with the positive $x$-axis.
\begin{figure}[h]
  \centering
  % Requires \usepackage{graphicx}
  \includegraphics[width=6cm]{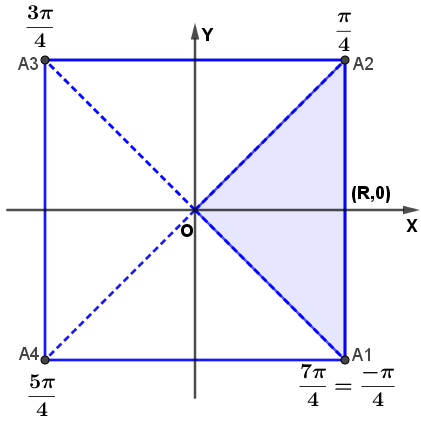}
    %[width=4.0cm]{figure1point7}\\
  \caption{}\label{fig11}
\end{figure}

\noindent To generate the surface over the triangle $OA_2A_3$, the radial segment $OP$ will be rotated from $\frac{\pi}{4}$ to $\frac{3\pi}{4}$. However, the triangle $OA_2A_3$ is considered to be rotated back to fit on the triangle $OA_1A_2$. The purpose is to use the cosine of the angle that $OP$ makes with the positive $x$-axis. This actually gives  the opportunity to control the parametrization of the whole surface in a single formula. Similarly, the surfaces over  the remaining triangles are obtained. \\

\noindent The parametrization of the square has been calculated as
\[   \left.
\begin{array}{lll}
      x &=& \frac{R}{a}\cos(r) \\
      & & \\
  y &=& \frac{R}{a}\sin(r) \\
  \end{array}
\right\},
 r\in \left[ \frac{-\pi}{4}, \frac{7\pi}{4}\right]  \]
$
a= \cos \left(r-(i-1)\frac{\pi}{2} \right), r\in \left[ \left( \frac{-1}{2}+(i-1)\right)\frac{ \pi}{2},\left( \frac{-1}{2}+i \right)\frac{ \pi}{2}\right],i=1,2,3,4$.\\

\noindent The above rotations can be performed using the following matrix transformation.

$$
\left(
          \begin{array}{c}
            x \\
            y \\
            z \\
          \end{array}
        \right) =
\left(
   \begin{array}{ccc}
     \cos(r) & -\sin(r) & 0 \\
     \sin(r) & \cos(r) & 0 \\
     0 & 0 & 1 \\
   \end{array}
 \right) \left(
          \begin{array}{c}
            \frac{R}{a}\cos(t) \\
            0\\
            R \sin(t) \\
          \end{array}
        \right)
$$ \\

\noindent The required surface can, thus, be expressed in the following parametrization\\
\[ \left.
\begin{array}{lll}
      x &=& \frac{R}{a}\cos(r)\cos(t) \\
      & & \\
      y &=& \frac{R}{a}\sin(r)\cos(t) \\
  & & \\
      z &=& R\sin(t)
\end{array}
\right\}, r \in \Big [ \frac{-\pi}{4}, \frac{7 \pi}{4} \Big ], t\in[0,\frac{\pi}{2}]
 \]
$ a= \cos \left(r-(i-1)\frac{\pi}{2} \right), r\in \left[ \left( \frac{-1}{2}+(i-1)\right)\frac{ \pi}{2},\left( \frac{-1}{2}+i \right)\frac{ \pi}{2}\right],i=1,2,3,4$.
\section{Generalization}
Extending the surface with square base to a surface with any regular $n$-gonal base, the general result has been obtained as:
\begin{thm}
The parametric representation of the surface with any regular $n$-gonal base is
\[   \left.
\begin{array}{lll}
      x &=& \frac{R}{a}\cos(r)\cos(t) \\
      & & \\
  y &=& \frac{R}{a}\sin(r)\cos(t) \\
  & & \\
  z &=& R\sin(t)  \\
\end{array}
\right\},
 r\in \left[ \left( \frac{-1}{2}\right)\frac{2 \pi}{n},\left( \frac{-1}{2}+n \right)\frac{2 \pi}{n}\right], t\in \left[ 0,\frac{\pi}{2} \right],  \]
$
a= \cos \left(r-(i-1)\frac{2 \pi}{n} \right), r\in \left[ \left( \frac{-1}{2}+(i-1)\right)\frac{2 \pi}{n},\left( \frac{-1}{2}+i \right)\frac{2 \pi}{n}\right], i=1,2,3, \ldots ,n
$
\end{thm}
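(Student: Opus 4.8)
The plan is to carry out the construction of Section~4 verbatim, with the angle $\frac{2\pi}{4}=\frac{\pi}{2}$ subtended at the centre by one side of the square replaced throughout by the angle $\frac{2\pi}{n}$ subtended by one side of a regular $n$-gon, and then to assemble the parametrisation from two ingredients already available: the decomposition of the base into $n$ congruent isosceles triangles, and the fact (Theorem~3.1) that the profile of the surface in any vertical plane through the centre is a quarter of an ellipse.

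First I would set up the base together with the length $|OP|$. Place the regular $n$-gon $A_{1}A_{2}\cdots A_{n}$ in the $xy$-plane with centre $O$ at the origin, apothem (centre-to-side distance) equal to $R$, oriented so that the midpoint of the side $A_{1}A_{2}$ lies on the positive $x$-axis. The radial segments $OA_{1},\dots ,OA_{n}$ split the $n$-gon into $n$ congruent isosceles triangles $T_{i}=OA_{i}A_{i+1}$ (indices read modulo $n$); the triangle $T_{i}$ has its axis of symmetry along the ray of polar angle $(i-1)\frac{2\pi}{n}$ and is swept out by the radial segment $OP$ as its polar angle $r$ runs over $\left[\left(\frac{-1}{2}+(i-1)\right)\frac{2\pi}{n},\left(\frac{-1}{2}+i\right)\frac{2\pi}{n}\right]$, which accounts for the piecewise domain and for the full range $r\in\left[\left(\frac{-1}{2}\right)\frac{2\pi}{n},\left(\frac{-1}{2}+n\right)\frac{2\pi}{n}\right]$ of length $2\pi$. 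Dropping the perpendicular from $O$ to the line $A_{i}A_{i+1}$, its foot $M_{i}$ is the midpoint of that side, so $|OM_{i}|=R$ and $M_{i}$ has polar angle $(i-1)\frac{2\pi}{n}$; projecting $OP$ onto $OM_{i}$ gives $|OP|\cos\!\left(r-(i-1)\frac{2\pi}{n}\right)=R$, i.e.\ $|OP|=R/a$ with $a=\cos\!\left(r-(i-1)\frac{2\pi}{n}\right)$. On the $r$-interval attached to $T_{i}$ the argument $r-(i-1)\frac{2\pi}{n}$ lies in $\left[-\frac{\pi}{n},\frac{\pi}{n}\right]\subset\left(-\frac{\pi}{2},\frac{\pi}{2}\right)$, so $a\in\left[\cos\frac{\pi}{n},1\right]$ is strictly positive ($a=1$ at the midpoint of a side and $a=\cos\frac{\pi}{n}$ at a vertex, where $R/a$ becomes the circumradius $R\sec\frac{\pi}{n}$), and the base polygon is $\left\{\left(\frac{R}{a}\cos r,\frac{R}{a}\sin r,0\right)\right\}$.

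Next I would erect the elliptic arc over each radial segment $OP$. In the vertical half-plane $\Pi_{r}$ spanned by the $z$-axis and the ray of polar angle $r$, write $\rho=\sqrt{x^{2}+y^{2}}\ge 0$ for the ground coordinate. By Theorem~3.1 applied in $\Pi_{r}$, the profile of the surface there is the quarter-ellipse joining the boundary point $P=(\rho,z)=(R/a,0)$ to the apex $(0,R)$ with axes along the coordinate directions, hence it is parametrised by $\rho=\frac{R}{a}\cos t$, $z=R\sin t$, $t\in\left[0,\frac{\pi}{2}\right]$; in Theorem~3.1 the horizontal semi-axis $R/a$ is the particular value $\sqrt{2}\,R$ realised along the diagonal of the square. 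Rotating $\Pi_{r}$ back about the $z$-axis through the angle $r$ — equivalently, applying the rotation matrix displayed in Section~4 to the column vector $\left(\frac{R}{a}\cos t,\,0,\,R\sin t\right)$ — turns $(\rho,z)$ into
\[
x=\frac{R}{a}\cos r\cos t,\qquad y=\frac{R}{a}\sin r\cos t,\qquad z=R\sin t ,
\]
and inserting the piecewise formula for $a$ together with the ranges of $r$ and $t$ found above yields precisely the asserted parametrisation.

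The computations are routine; the only point needing care is the indexing, namely checking that $T_{i}$ corresponds to the interval $\left[\left(\frac{-1}{2}+(i-1)\right)\frac{2\pi}{n},\left(\frac{-1}{2}+i\right)\frac{2\pi}{n}\right]$ and that on it the argument $r-(i-1)\frac{2\pi}{n}$ of $a$ stays within $\left[-\frac{\pi}{n},\frac{\pi}{n}\right]$, so that $a>0$ and the formula has no singularity. There is no genuine analytic obstacle: the passage from $n=4$ to arbitrary $n$ is the single substitution $\frac{\pi}{2}\mapsto\frac{2\pi}{n}$ in the work of Section~4, and the elliptic character of each profile is exactly Theorem~3.1 transported into the plane $\Pi_{r}$.
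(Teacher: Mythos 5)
Your proposal follows essentially the same route as the paper: the paper's own proof of this theorem is a one-line remark that the square-base construction of Section~4 carries over with the central angle $\frac{\pi}{2}$ replaced by $\frac{2\pi}{n}$, which is precisely the substitution you perform. Your write-up is in fact more detailed than the paper's, since you explicitly verify the decomposition into $n$ isosceles triangles, the relation $|OP|=R/a$ via projection onto the apothem, the positivity of $a$ on each subinterval, and the quarter-elliptic profile, all of which the paper leaves implicit.
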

\begin{proof}
The procedure is already explained in case of a squared base. However, in the generalized case quarter circular arc will be rotated on the complete boundary of the polygon, instead of semicircular arc rotated only on the half boundary of the square. For instance, the surfaces for $ n=5, 6$ and $7$ have been shown in Figure~\ref{fig12}.\\
\begin{figure}[h]
  \centering
  % Requires \usepackage{graphicx}
  \begin{minipage}{3.75cm}
  \includegraphics[width=3.5cm]{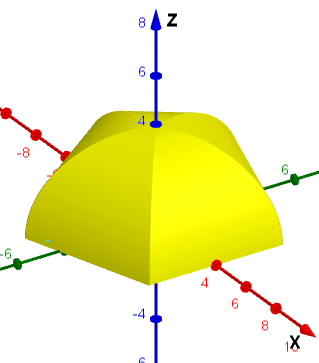}
  \end{minipage}
  \begin{minipage}{3.75cm}
  \includegraphics[width=3.5cm]{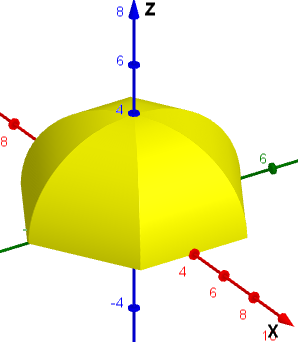}
  \end{minipage}
  \begin{minipage}{3.75cm}
  \includegraphics[width=3.5cm]{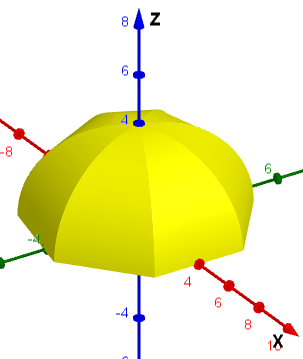}
  \end{minipage}
  \caption{}\label{fig12}
   \end{figure}\\
\end{proof}
%\goodbreak
%\newpage

\section{Volume of the Solid}
First the volume of solid with square base is calculated. \\
Since the side of the cuboid is $2R$ with height $R$, the volume of the cuboid is $V_c = (4R^2)R=4R^3$ and the volume of the pyramid is $V_p = (\frac{1}{3})(V_c)$. Thus the volume of the corresponding solid is $V = V_c-V_p = (\frac{2}{3})(V_c) = (\frac{8}{3})R^3$ \\
To determine the volume of solid corresponding to  regular  $n$-sided polygonal base, each $n$-sided regular polygon has been partitioned into $n$ equilateral triangles, each with height $R$; an equilateral triangle is depicted in Figure~\ref{fig13}, for $n=7$.\\
\begin{figure}[h]
  \centering
  % Requires \usepackage{graphicx}
  \includegraphics[width=6.5cm]{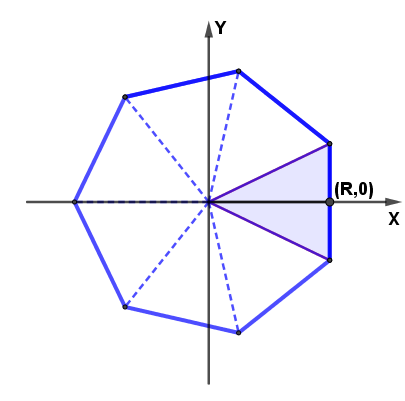}
    \caption{}\label{fig13}
   \end{figure}\\
 The area  of each equilateral triangle is determined as $ R^2 \tan(\frac{\pi}{n})$,  which shows that the area  of $n$-sided polygon will be $ n R^2 \tan(\frac{\pi}{n})$.\\
The volume $V_{Prism}$ of the corresponding prism with height $R$ will be\\
$ V_{Prism} = A_nR = n R^2 \tan(\frac{\pi}{n})R = n R^3 \tan(\frac{\pi}{n})$ \ which shows that the volume of corresponding pyramid will be $V_{Pyramid} = \frac{1}{3}V_{Prism}$\\
Thus the volume of solid can be calculated as\\
$V = V_{Prism}-V_{Pyramid}=\frac{2}{3} V_{Prism} = \frac{2}{3}n R^3 \tan(\frac{\pi}{n})$

\section{Conclusion}

In this article a surface has been geometrically developed and its parametrization has also been given. A solid prism with  a regular polygon base and hight equal to the radius of the in-circle of the base, contains a solid cone  with base as top of the prism and vertex at the center of the prism base. The cone has been then pulled out of the cylinder, and the cylinder has been cut into $n$ horizontal slices (washers). Corresponding to each washer, a disc with volume equal to the volume of the washer has been considered. Then the discs have been stacked in the sequence of the corresponding washers. By indefinitely increasing the number of washers,  the  solid with smooth surface has been  obtained. Secondly,  the curve of intersection of the surface and any vertical plane through the origin has been proved elliptic. Finally, parameterization of both the regular polygonal base and the developed surface, have been determined. Moreover, following this technique the volume of the solid has been determined.

\end{document}